\newcommand{\weight}[2]{W(#1,#2)}
\theoremstyle{definition} \newtheorem{definition}{Definition}[section]
\theoremstyle{plain} \newtheorem{theorem}[definition]{Theorem}
\theoremstyle{plain} 
\theoremstyle{plain} \newtheorem{proposition}[definition]{Proposition}
\theoremstyle{plain} 
\theoremstyle{plain} 
\theoremstyle{definition} \newtheorem{example}[definition]{Example}
\theoremstyle{remark} \newtheorem{remark}[definition]{Remark}
\theoremstyle{remark} \newtheorem{question}[definition]{Question}
\DeclareRobustCommand\onedot{\futurelet\@let@token\@onedot}
\def\@onedot{\ifx\@let@token.\else.\null\fi\xspace}
 \def\Eg{{E.g}\onedot}
\def\ie{{i.e}\onedot}
\def\etal{{et al}\onedot}
\begin{document}

\title{Games and Complexes II: Weight Games and Kruskal-Katona Type Bounds}
\author{Sara Faridi \and Svenja Huntemann \and Richard J.~Nowakowski}

\address{Dalhousie University\\
Halifax, Nova Scotia\\
B3H 3J5, Canada}
\email{faridi@mathstat.dal.ca, svenjah@mathstat.dal.ca, rjn@mathstat.dal.ca}

\keywords{Combinatorial games, simplicial complexes, placement game.}
\subjclass[2010]{Primary 91A46; Secondary 13F55;}

\thanks{The first and third authors' research was supported in part by the Natural Sciences and Engineering Research Council of Canada.}

\begin{abstract}
A strong placement game $G$ played on a board $B$ is equivalent to a simplicial complex $\Delta_{G,B}$. We look at weight games, a subclass of strong placement games, and introduce upper bounds on the number of positions with $i$ pieces in $G$, or equivalently the number of faces with $i$ vertices in $\Delta_{G,B}$, which are reminiscent of the Kruskal-Katona bounds.
\end{abstract}

\maketitle

\section{Introduction}
Our goal in this paper is to study complexes of placement games (Definition \ref{def:placement}). In \cite{GameCompI} we demonstrated that to a placement game $G$ played on a board $B$ one can associate a simplicial complex $\Delta_{G,B}$ where $G$ can be considered as a game played on $\Delta_{G,B}$.

The main questions that we address in this paper is: What complexes can be legal complexes of a placement game?

We give partial answers to this question in specific cases: when the board is a path, a cycle, or a complete graph (also see \cite{MSc}).

We begin by introducing some of the concepts needed. A complete introduction is given in \cite{GameCompI}.

\begin{definition}\label{def:placement}
A \textit{strong placement game} is a combinatorial game played on a graph which satisfies the following:
\begin{itemize}
\item[(i)] The starting position is the empty board.
\item[(ii)] Players place pieces on empty spaces of the board according to the rules.
\item[(iii)] Pieces are not moved or removed once placed.
\item[(iv)] The rules are such that if it is possible to reach a position through a sequence of legal moves, then any sequence of moves leading to this position consists only of legal moves.
\end{itemize}
The \textsc{Trivial} placement game on a board is the placement game that has no additional rules.
\end{definition}

Throughout this paper `placement game' refers to a strong placement game. Since placement games are played on a graph, we use the terms board and graph, and space and vertex interchangeably.

A \textit{basic position} is a board with only one piece placed. Any position, whether legal or illegal, in a placement game can be decomposed into basic positions.

\begin{definition}
A \textit{simplicial complex} $\Delta$ on a finite vertex set $V$ is a set of subsets (called \textit{faces}) of $V$ with the conditions that if $A\in \Delta$ and $B\subseteq A$, then $B\in \Delta$. The \textit{$f$-vector} $(f_0, f_1, \ldots, f_k)$ of a simplicial complex $\Delta$ enumerates the number of faces $f_i$ with $i$ vertices. Note that if $\Delta\neq\emptyset$, then $f_0=1$.
\end{definition}

The \textit{legal complex} \cite{GameCompI}, denoted by $\Delta_{G,B}$, is the simplicial complex whose faces correspond to the legal positions of the placement game $G$ played on the board $B$.

\begin{question}\label{Q}
Is every simplicial complex the legal complex of a placement game?
\end{question}

In respect to this question, we are interested in the possible $f$-vectors of legal complexes, thus we will consider the following:

The number of positions in $G$ on $B$ with $i$ pieces played, or equivalently the number of faces with $i$ vertices in the legal complex $\Delta_{G,B}$, is denoted by $f_i(G,B)$, or shortened to $f_i$ if the game and board are clear. In this work, we will be considering upper bounds on $f_i(G,B)$. Specifically, we will be considering Kruskal-Katona type bounds for weight games played on a path, on a cycle, or on a complete graph.

\section{The Kruskal-Katona Theorem}

Kruskal \cite{Kr63} and Katona \cite{Ka68} proved that for each pair of non-negative integers $f$ and $i$, $f$ can be written in the form
\[f=\binom{n_i}{i}+\binom{n_{i-1}}{i-1}+\ldots+\binom{n_{i-s}}{i-s}\] where $n_i>n_{i-1}>\ldots>n_{i-s}\geq i-s\geq 1$ are unique. This sum is called the \textit{$i$-canonical representation of $f$}.

We can then define the \textit{$j$th pseudopower of $f$}
\[f^{(j)}_i=\binom{n_i}{j}+\binom{n_{i-1}}{j-1}+\ldots+\binom{n_{i-s}}{j-s}\]
for $j\ge 1$.

The Kruskal-Katona theorem gives necessary and sufficient conditions for a vector $(f_0, f_1,\ldots, f_k)$ with entries from the non-negative integers to be the $f$-vector of a simplicial complex. The following is the version proven by Kruskal:

\begin{theorem}[Kruskal \cite{Kr63}]
	For the sequence of non-negative integers $(f_0, f_1,\ldots, f_k)$ the following are equivalent:
	\begin{itemize}
		\item[\emph{(i)}] $(f_0, f_1,\ldots, f_k)$ is the $f$-vector of a non-empty simplicial complex;
		\item[\emph{(ii)}] $f_0=1$ and $f_{j}\le f_{i}^{(j)}$ for all $1\le i\le j$;
		\item[\emph{(iii)}] $f_0=1$ and $f_{j}\ge f_{i}^{(j)}$ for all $1\le j\le i$.
	\end{itemize}
\end{theorem}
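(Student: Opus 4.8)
The plan is to establish the result through the standard machinery of the \emph{colexicographic (colex) order} on finite subsets of $\mathbb{N}$ together with the \emph{shadow} operator $\partial$, which sends a family $\mathcal{A}$ of $i$-element sets to the family $\partial\mathcal{A}$ of all $(i-1)$-element sets contained in some member of $\mathcal{A}$. The whole theorem reduces to one quantitative statement about shadows plus a purely numerical bookkeeping lemma about the pseudopower operation $f\mapsto f^{(j)}_i$. Concretely, I would first observe that in the convention used here, where $f_i$ counts $i$-element faces, a non-empty complex forces $f_0=1$, and that the downward-closure axiom of a simplicial complex says exactly that the $(i-1)$-faces must contain the shadow of the $i$-faces. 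So the entire content is to pin down, for each fixed size, the minimum possible shadow.

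The combinatorial core, and the step I expect to be the main obstacle, is the \textbf{shadow-minimization lemma}: among all families of $m$ distinct $i$-element sets, an initial segment of the colex order has the smallest shadow. I would prove this by \emph{compression}. Define for $s<t$ the operator $C_{st}$ that, in every set containing $t$ but not $s$, replaces $t$ by $s$ whenever the result is not already present; show that each $C_{st}$ preserves $|\mathcal{A}|$ and does not increase $|\partial\mathcal{A}|$; and show that iterating the $C_{st}$ terminates at a \emph{compressed} family whose shadow is no larger than that of $\mathcal{A}$. A final induction on $i$ and on $m$, splitting the sets according to whether they contain the largest coordinate, then shows that a compressed family of size $m$ has shadow at least that of the colex-initial segment of size $m$. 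Computing the shadow of the colex-initial segment explicitly, using the $i$-canonical representation $m=\binom{n_i}{i}+\cdots+\binom{n_{i-s}}{i-s}$, yields exactly $|\partial\mathcal{A}|\ge m^{(i-1)}_i$ and shows that this bound is attained.

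With the lemma in hand, the implication (i) $\Rightarrow$ (iii) is short: for a complex $\Delta$ the $i$-faces $\mathcal{A}=\Delta_i$ satisfy $\partial\mathcal{A}\subseteq\Delta_{i-1}$, so $f_{i-1}\ge|\partial\mathcal{A}|\ge f_i^{(i-1)}$; iterating this single-step bound down from $i$ to $j$ and invoking the monotonicity and composition identities of the pseudopower operator upgrades it to $f_j\ge f_i^{(j)}$ for all $j\le i$. The equivalence (ii) $\Leftrightarrow$ (iii) is then purely numerical: one checks that the consecutive bounds $f_{i-1}\ge f_i^{(i-1)}$ and $f_{i+1}\le f_i^{(i+1)}$ are inter-derivable, and that each propagates to non-consecutive indices through the same composition property, so no further geometric input is needed.

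Finally, for (iii) $\Rightarrow$ (i) I would \emph{construct} a realizing complex. Take the $i$-faces to be the first $f_i$ of the $i$-subsets in colex order, for each $i$. The point is that the shadow of a colex-initial segment is again a colex-initial segment, of size exactly $f_i^{(i-1)}\le f_{i-1}$ by hypothesis; hence every $(i-1)$-subset of a chosen $i$-face is itself among the chosen $(i-1)$-faces, which is precisely the simplicial-complex axiom. Since $f_0=1$ the complex is non-empty, and by construction its $f$-vector is $(f_0,\dots,f_k)$. This closes the cycle of implications.
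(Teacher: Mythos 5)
The paper does not prove this theorem at all: it is quoted as a known classical result, attributed to Kruskal \cite{Kr63} and Katona \cite{Ka68}, and used as a black box, so there is no internal proof to compare yours against. Judged on its own merits, your outline is the standard modern proof of the Kruskal--Katona theorem --- shadow minimization by colexicographic initial segments, established via compression/shifting --- which is closer in spirit to the later simplified proofs (Daykin, Frankl, Hilton, Clements--Lindstr\"{o}m) than to the original arguments in \cite{Kr63} and \cite{Ka68}, and it is sound in outline. You correctly isolate the single piece of genuine combinatorial content (the shadow-minimization lemma) and derive all three equivalences from it in the right way: (i) $\Rightarrow$ (iii) because the $(i-1)$-faces of a complex contain the shadow of its $i$-faces; (iii) $\Rightarrow$ (i) by realizing the vector with colex initial segments on every level, using that the shadow of a colex initial segment is again a colex initial segment of size exactly $f_i^{(i-1)}$; and (ii) $\Leftrightarrow$ (iii) by the numerical duality between the two pseudopower bounds. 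Two execution details deserve emphasis if the plan is carried out in full. First, a family stable under all compressions $C_{st}$ (a shifted family) need not be a colex initial segment, so the concluding induction comparing shifted families with colex segments is the real work of the lemma rather than a cleanup step; your sketch does account for this. Second, the ``composition identities'' of pseudopowers you invoke to pass from consecutive indices to arbitrary pairs $i\le j$ require care: the expression defining $f_i^{(j)}$ need not itself be a canonical representation (trailing terms can degenerate to binomials of the form $\binom{n}{0}$), so composing pseudopowers forces one to recompute canonical representations; this is routine bookkeeping, but it should be verified rather than cited as obvious.
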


To show that (ii) holds, it is sufficient to show that $f_0=1$ and $f_{i+1}\le f_i^{(i+1)}$ for all $i\ge 1$ since all other cases follow. Similarly, to show (iii), showing $f_0=1$ and $f_j\ge f_{j+1}^{(j)}$ for all $j\ge 1$ is sufficient. The Kruskal-Katona theorem is usually stated in terms of either one of these.

If the answer to Question \ref{Q} is ``no'', then not every vector that is an $f$-vector of a simplicial complex is also an $f$-vector of a legal complex. Thus for the remainder, after introducing weight games, we will give improved upper bounds on the entries of an $f$-vector of a legal complex.

\section{Games with Weight}\label{sec:weight}

In the remainder, we will consider playing pieces of larger size. Specifically, we call the number of connected vertices a piece covers the \textit{weight} of this piece.

Many placement games have pieces of weight greater than 1. For example, in \textsc{Domineering} \cite{BCG04} and \textsc{Crosscram} \cite{Ga74} Left and Right both play dominoes as their pieces, and so their pieces are of weight 2. Also, as we will mention in Remark \ref{remark:octal}, partizan octal games are equivalent to placement games with weight on a path.

\begin{example}
Consider the board given in Figure \ref{fig:weightex}. A piece that has weight 4 could for example be played on the vertex set $\{1,2,3,4\}$, but not on the vertex set $\{1,3,5,6\}$ since these vertices are not connected.
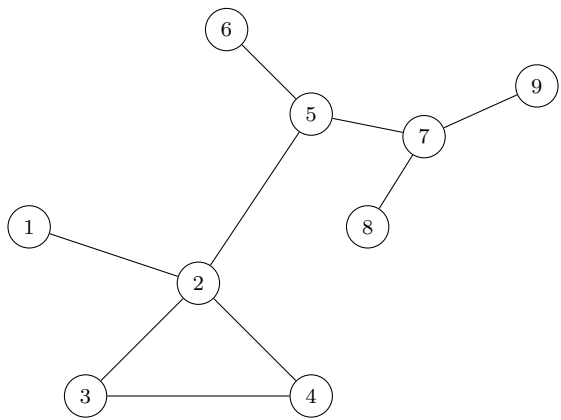
\begin{figure}[!ht]
\begin{center}
	\begin{tikzpicture}[scale=1.5]
		\node[shape=circle, draw] (1) at (-0.5,0) {\footnotesize 1};
		\node[shape=circle, draw] (2) at (1, -0.5) {\footnotesize 2};
		\node[shape=circle, draw] (3) at (0,-1.5) {\footnotesize 3};
		\node[shape=circle, draw] (4) at (2, -1.5) {\footnotesize 4};
		\node[shape=circle, draw] (5) at (2,1) {\footnotesize 5};
		\node[shape=circle, draw] (6) at (1.25, 1.75) {\footnotesize 6};
		\node[shape=circle, draw] (7) at (3, 0.8) {\footnotesize 7};
		\node[shape=circle, draw] (8) at (2.5,0) {\footnotesize 8};
		\node[shape=circle, draw] (9) at (4, 1.25) {\footnotesize 9};
		\draw (1) to (2) to (3) to (4) to (2) to (5) to (7) to (8);
		\draw (5) to (6);
		\draw (7) to (9);
	\end{tikzpicture}
\end{center}
\caption{An Example Board}
\label{fig:weightex}
\end{figure}
\end{example}

We usually assume that every piece of Left has the same weight $a$, and every piece of Right has the same weight $b$. 

\begin{definition}\label{def:weightgame}
A placement game in which the players play pieces of fixed weights is called a \textit{game with weights}. If the game has no rules besides pieces having to be placed on connected sets of empty vertices, we call it a \textit{weight game}. A $2$-player weight game will be denoted by $\weight{a}{b}$ where $a$ is the weight that Left plays, while $b$ is the weight that Right plays.
\end{definition}

Essentially, the weight game is the \textsc{Trivial} placement game with weights. 

In \cite{MSc}, it is shown that the game $\weight{a}{a}$ played on a path or a cycle is equivalent to another placement game in which both Left and Right play pieces of weight $1$. This is not necessarily true though if we force every basic position to be legal, as the following discussion shows.

Consider a placement game $G$ in which both Left and Right play pieces of weight $1$ and every basic position is legal. Since the basic positions in this case consists of Left or Right occupying a single vertex, we have $n$ Left and Right basic positions each, where $n$ is the number of vertices of the board. Thus we have that the number of legal positions with one piece is the number of basic positions, namely $f_1=2n$.

This also implies that a weight game $\weight{a}{b}$ where $f_1$ is odd is not equivalent to a placement game where both Left and Right play pieces of weight $1$ and basic positions are legal. Weight games with $f_1$ odd indeed exist, as seen in the following example.
\begin{example}
Consider $\weight{1}{2}$ played on $P_2$. The basic positions are
\begin{center}
	\begin{tikzpicture}[scale=0.75]
		\draw[line width=1pt] (0,0)--(2,0)--(2,-1)--(0,-1)--cycle;
		\draw[line width=1pt] (1,0)--(1,-1);
		\filldraw[fill=gray!30, line width=1pt] (0.15,-0.2)--(0.85,-0.2)--(0.85,-0.8)--(0.15,-0.8)-- cycle;
		\draw (0.5,-0.5) node {$L$};
	\end{tikzpicture}\quad
	\begin{tikzpicture}[scale=0.75]
		\draw[line width=1pt] (0,0)--(2,0)--(2,-1)--(0,-1)--cycle;
		\draw[line width=1pt] (1,0)--(1,-1);
		\filldraw[fill=gray!30, line width=1pt] (1.15,-0.2)--(1.85,-0.2)--(1.85,-0.8)--(1.15,-0.8)-- cycle;
		\draw (1.5,-0.5) node {$L$};
	\end{tikzpicture}\quad
	\begin{tikzpicture}[scale=0.75]
		\draw[line width=1pt] (0,0) -- (2,0) -- (2, -1) -- (0, -1) -- cycle;
		\draw[line width=1pt] (1,0) -- (1, -1);
		
		\filldraw[fill=gray!30, line width=1pt] (0.15,-0.2)--(1.85,-0.2)--(1.85,-0.8)--(0.15,-0.8)-- cycle;		
		\draw (0.5,-0.5) node {$R$};
		\draw (1.5,-0.5) node {$R$};		
		\draw[draw=gray, line width=1pt] (1,-0.23)--(1,-0.77);		
	\end{tikzpicture}
	\end{center}
and thus if all basic positions are legal, then $f_1=3$.
\end{example}

Suppose the weight of the Left pieces is $a$ and the weight of the Right pieces $b$ and without loss of generality $a\le b$, then Left would be able to place at most $\lfloor n/a\rfloor$ pieces on a board of $n$ vertices. If we place a mix of Left and Right pieces or just Right pieces, the number of pieces we are able to place will be equal or less. Thus if the $f$-vector of the legal complex is $(f_0, f_1, \ldots, f_k)$, then \[k\le\max\{\left\lfloor n/a\right\rfloor, \left\lfloor n/b\right\rfloor\}.\]

\begin{proposition}\label{thm:weight1bound}
	For legal complexes corresponding to games on any board of $n$ vertices with pieces of weight 1, we have
	\begin{equation*}
		f_i\leq \binom{n}{i}2^i
	\end{equation*}
	for $i\ge 0$.
\end{proposition}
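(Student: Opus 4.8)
The plan is to prove this as a direct counting upper bound, exploiting the fact that legal positions form a subset of \emph{all} conceivable positions with $i$ pieces. First I would observe that because every piece has weight $1$, each piece occupies exactly one vertex, and since pieces are placed only on empty spaces (Definition \ref{def:placement}(ii)), no two pieces share a vertex. Consequently any position—legal or illegal—with $i$ pieces placed occupies a set of exactly $i$ distinct vertices of the board.

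Next I would encode each such position as the data of two independent choices: the choice of which $i$ of the $n$ vertices are occupied, and, for each occupied vertex, the choice of whether it carries a Left piece or a Right piece. This gives a well-defined map from the set of all positions with $i$ pieces into the set of pairs $(S, \varphi)$, where $S \subseteq V$ with $|S| = i$ and $\varphi \colon S \to \{L, R\}$. Since the occupied vertices and their ownership completely determine the position (again using weight $1$, so there is no ambiguity about which vertices a given piece covers), this map is injective. The number of such pairs is $\binom{n}{i}$ for the choice of $S$ times $2^i$ for the labeling $\varphi$, giving a total of $\binom{n}{i}2^i$ positions with $i$ pieces.

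Finally, I would note that the legal positions with $i$ pieces are by definition a subset of all positions with $i$ pieces, so their count $f_i$ is at most $\binom{n}{i}2^i$. The base case $i=0$ is the empty board, giving $f_0 = 1 = \binom{n}{0}2^0$, consistent with the convention that $\Delta_{G,B}\neq\emptyset$ forces $f_0 = 1$.

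I do not expect a substantive obstacle here: the argument is a clean counting bound, and the only point requiring care is verifying that the encoding is unambiguous, which is exactly where the weight-$1$ hypothesis is used (for heavier pieces a vertex set together with an $L/R$ labeling need not determine a position, and occupied vertices need not decompose uniquely into pieces). The bound is generally far from tight, since most of these $\binom{n}{i}2^i$ positions will typically be illegal once genuine placement rules are imposed; the value of the statement is that it holds for \emph{any} weight-$1$ game on \emph{any} board, independent of the specific rules.
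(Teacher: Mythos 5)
Your proof is correct and is essentially the paper's argument: the paper bounds $f_i$ by the position count of the \textsc{Trivial} placement game (choose $i$ of the $n$ vertices, then label each $L$ or $R$), which is exactly your universe of all positions with $i$ weight-$1$ pieces, with legality giving the subset inequality. The only difference is packaging --- you phrase the comparison as ``legal positions inject into all $(S,\varphi)$ pairs'' while the paper phrases it as ``adding rules to \textsc{Trivial} only removes positions'' --- but these are the same counting bound.
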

\begin{proof}
	We will consider the number of positions with $i$ pieces of weight 1 in the placement game that has no additional rules, \ie the \textsc{Trivial} placement game. As we add rules to this game to get other placement games with pieces of weight 1, the number of positions decreases, thus the number of such positions in \textsc{Trivial} gives the maximum. In \textsc{Trivial}, there are $\binom{n}{i}$ ways to choose $i$ spaces to place pieces, for each there are 2 choices: either a Left piece, or a Right piece. Our claim now follows.
\end{proof}

We will now look at how playing pieces of specified weight on different classes of boards influences the $f$-vector of the corresponding legal complex. The classes of boards we specifically look at are paths, cycles, and complete graphs.

Note that the $f$-vector of a weight game gives an upper bound on the $f$-vector of a game with the same weights. Thus the formulae for the weight games in the following sections give bounds for games with weight.

In \cite{MSc}, these formulae are also generalized to $t$-player weight games.

\section{Playing on the Path $P_n$}
In this section, we study placement games played on the path $P_n$, $n\ge 1$, in which Left plays pieces of weight $a$ and Right pieces of weight $b$.
\begin{proposition}\label{thm:path1}
If a simplicial complex is the legal complex of a weight game $\weight{a}{b}$ played on $P_n$ then
	\begin{equation}
    f_1=
    \begin{cases}
    	0 & \text{if } a,b>n,\\
    	n-a+1 & \text{if } a\le n \text{ and } b>n,\\
    	n-b+1 & \text{if } a> n \text{ and } b\le n,\\
    	2n-a-b+2 & \text{if } a,b\le n.
    \end{cases}
	\end{equation}
\end{proposition}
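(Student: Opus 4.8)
The plan is to observe that in a weight game, which by Definition \ref{def:weightgame} is the \textsc{Trivial} placement game equipped with weights, there are no rules beyond the requirement that each piece occupy a connected set of empty vertices. Consequently every basic position---every placement of a single piece---is legal, so $f_1$ simply counts the single-piece placements. Since a position with one piece is either a single Left piece (weight $a$) or a single Right piece (weight $b$), I would write $f_1 = L_a + L_b$, where $L_w$ denotes the number of ways to place one piece of weight $w$ on $P_n$, and compute each summand independently.

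The core combinatorial step is to count $L_w$ for a single weight $w$. A piece of weight $w$ must cover $w$ connected vertices, and on the path $P_n$ the connected vertex subsets of cardinality $w$ are precisely the runs of $w$ consecutive vertices. Labelling the vertices $1, 2, \ldots, n$ along the path, such a run is determined by its leftmost vertex $i$, and the run $\{i, i+1, \ldots, i+w-1\}$ fits inside the path exactly when $1 \le i$ and $i + w - 1 \le n$, that is, $1 \le i \le n - w + 1$. Hence $L_w = n - w + 1$ when $w \le n$, and $L_w = 0$ when $w > n$, since no run of length $w$ then fits.

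Substituting $w = a$ and $w = b$ and splitting according to whether each of $a$ and $b$ is at most $n$ then yields the four cases. When $a, b > n$ neither piece fits and $f_1 = 0$; when exactly one weight is at most $n$ only that player contributes, giving $n - a + 1$ or $n - b + 1$; and when $a, b \le n$ both contribute, giving $(n - a + 1) + (n - b + 1) = 2n - a - b + 2$. I do not anticipate a genuine obstacle here, as the argument is a direct count; the only point requiring care is the identification of the connected subsets of $P_n$ with intervals of consecutive vertices, together with the correct handling of the boundary cases in which a piece is too large to be placed.
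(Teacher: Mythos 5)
Your proposal is correct and follows essentially the same argument as the paper: both count legal basic positions by identifying a placed piece with its leftmost vertex, so that a piece of weight $w$ on $P_n$ has $n-w+1$ possible positions when $w\le n$ and none otherwise, and then sum the Left and Right contributions case by case.
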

\begin{proof}
    We are measuring the number of legal basic positions. If $a,b>n$, then neither Left nor Right can place a piece, thus $f_1=0$. If $n\ge a$, then placing one piece of weight $a$ on a strip of length $n$ is equivalent to placing one piece of weight $1$ (think of the left-most end of the piece) on a strip of length $n-(a-1)=n-a+1$, so the second and third case follow. Similarly, for the final case
    \begin{align*}
    	f_1&= (n-a+1)+(n-b+1)\\
    	&=2n-a-b+2.\qedhere
	\end{align*}
\end{proof}

\begin{proposition}\label{thm:path2}
In a weight game $\weight{a}{b}$ played on $P_n$, the number of positions with one Left and one Right piece is
\begin{equation*}
    N_{LR}=
    \begin{cases}
    	0 & \text{if } a+b>n,\\
    	2\binom{n-a-b+2}{2} & \text{if } a+b\le n.
    \end{cases}
\end{equation*}

The number of positions with two Left pieces or two Right pieces, respectively, is
\begin{equation*}
    N_{LL}=
    \begin{cases}
    	0 & \text{if } 2a>n,\\
    	\binom{n-2a+2}{2} & \text{if } 2a\le n;
    \end{cases}\qquad
     N_{RR}=
    \begin{cases}
    	0 & \text{if } 2b>n,\\
    	\binom{n-2b+2}{2} & \text{if } 2b\le n.
    \end{cases}
\end{equation*}

For the legal complex of such a game we have
\begin{equation}\label{eq:path2final}
	f_2= N_{LR}+N_{LL}+N_{RR}.
\end{equation}
\end{proposition}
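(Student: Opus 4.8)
The plan is to partition the two-piece positions by the multiset of piece types that occur---either two Left pieces, two Right pieces, or one Left and one Right---so that the three counts $N_{LL}$, $N_{RR}$, $N_{LR}$ are mutually exclusive and exhaust all legal positions with two pieces. Since a weight game is the \textsc{Trivial} placement game with weights, the only constraint on a position is that the pieces occupy disjoint connected blocks of the path, and condition (iv) of Definition \ref{def:placement} guarantees that each such arrangement really is a legal position regardless of the order in which the two pieces were played. Granting the three individual counts, the final identity $f_2=N_{LR}+N_{LL}+N_{RR}$ is then immediate, because every face on two vertices of the legal complex is precisely a legal two-piece position and lies in exactly one of the three classes.

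To compute $N_{LL}$ (and $N_{RR}$ by the symmetric argument with $b$ in place of $a$), I would extend the contraction already used in the proof of Proposition \ref{thm:path1}: replace each weight-$a$ Left piece by its left-most vertex, which collapses the two occupied blocks and turns a legal placement of two weight-$a$ pieces on $P_n$ into a placement of two weight-$1$ pieces on a path of $n-2(a-1)=n-2a+2$ vertices. One checks that re-expanding each marker back to a block of $a$ vertices inverts this map, so it is a bijection; hence $N_{LL}$ equals the number of ways to place two \emph{indistinguishable} weight-$1$ Left pieces on $n-2a+2$ vertices, namely $\binom{n-2a+2}{2}$. When $2a>n$ the contracted board has fewer than two vertices and admits no placement, giving $N_{LL}=0$ and matching the stated case split.

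For $N_{LR}$ I would run the same contraction, now collapsing the weight-$a$ block and the weight-$b$ block each to a single vertex, landing on a path of $n-(a-1)-(b-1)=n-a-b+2$ vertices that carries one Left marker and one Right marker at distinct positions. The one point demanding care---and the crux of the proposition---is \emph{distinguishability}: because the Left and Right markers have different colours, the two left-to-right orderings (Left-then-Right and Right-then-Left) give genuinely different positions, so one counts \emph{ordered} pairs of distinct positions, $(n-a-b+2)(n-a-b+1)=2\binom{n-a-b+2}{2}$, whereas for $N_{LL}$ the two identical Left markers are interchangeable and no factor of $2$ appears. The same boundary check shows $N_{LR}=0$ once $a+b>n$. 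I expect the only real obstacle to be verifying that the contraction is a bona fide bijection and keeping the distinguishability bookkeeping straight; as a cross-check I would also run the equivalent and perhaps more transparent stars-and-bars count of the gap sizes $g_0+g_1+g_2=n-a-b$ (respectively $n-2a$) with each $g_j\ge 0$, which reproduces the same binomials directly.
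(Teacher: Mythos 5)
Your proposal is correct, but your route to the two-piece counts is genuinely different from the paper's. The paper computes $N_{LR}$ by a conditional sum: assume by symmetry that the Left piece is the leftmost one (this supplies the factor $2$), fix its position $i$ with $1\le i\le n-a-b+1$, count the $n-a-b+2-i$ placements of the Right piece on the remaining strip using Proposition \ref{thm:path1}, and evaluate the arithmetic series $\sum_{i=1}^{n-a-b+1}(n-a-b+2-i)=\binom{n-a-b+2}{2}$; the counts $N_{LL}$, $N_{RR}$ are obtained the same way, and the identity $f_2=N_{LR}+N_{LL}+N_{RR}$ follows since the three classes exhaust all pairs. You instead globalize the single-piece contraction from Proposition \ref{thm:path1}: collapsing each occupied block to its leftmost vertex gives a bijection between two-piece positions on $P_n$ and two-marker placements on a path with $n-2a+2$ (respectively $n-a-b+2$) vertices, so the binomial coefficients appear directly, with the factor $2$ in $N_{LR}$ coming from the distinguishability of the $L$ and $R$ markers rather than from a leftmost-piece symmetry argument. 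Both are sound: the paper's summation is more elementary and self-contained, while your bijection (and the equivalent stars-and-bars count of gap sizes) explains structurally why the answers are exactly binomial coefficients and why $N_{LR}$ is exactly twice the one-ordering count, and it extends more readily to positions with more than two pieces. The one detail you should write out explicitly is the inverse of the contraction in the mixed case: if the markers sit at positions $p_L<p_R$ on the contracted path, the Left block starts at $p_L$ and the Right block at $p_R+a-1$ (symmetrically when $p_R<p_L$), and one checks this preserves disjointness and stays on the board, which confirms the bijection and the boundary case $N_{LR}=0$ when $a+b>n$.
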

\begin{proof}
To find $N_{LR}$ when $n\ge a+b$, we only consider the case in which the Left piece is the left-most piece. The other case is symmetric. We will first place the Left piece in position $i$. To be able to fit a Right piece to the right of this, we have $1\le i\le n-a-b+1$.
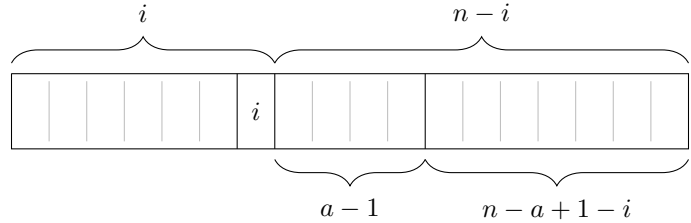
\begin{figure}[!ht]
	\begin{center}
	\begin{tikzpicture}
		\draw (0,1)--(9,1)--(9,0)--(0,0)--(0,1);
		\draw (3,1)--(3,0);
		\draw (3.5,1)--(3.5,0);
		\draw (5.5,1)--(5.5,0);
		\node at (3.25,0.5) {$i$};
		\foreach \x in {0.5,1,1.5,2,2.5,4,4.5,5,6,6.5,7,7.5,8,8.5}{
			\draw[gray!50] (\x,0.9)--(\x,0.1);}
		\draw [decorate,decoration={brace,amplitude=10pt,raise=4pt},yshift=0pt]
(0,1) -- (3.5,1) node [black,midway,yshift=0.8cm] {$i$};
		\draw [decorate,decoration={brace,amplitude=10pt,mirror,raise=4pt},yshift=0pt]
(3.5,0) -- (5.5,0) node [black,midway,yshift=-0.8cm] {$a-1$};
		\draw [decorate,decoration={brace,amplitude=10pt,raise=4pt},yshift=0pt]
(3.5,1) -- (9,1) node [black,midway,yshift=0.8cm] {$n-i$};
		\draw [decorate,decoration={brace,amplitude=10pt,mirror,raise=4pt},yshift=0pt]
(5.5,0) -- (9,0) node [black,midway,yshift=-0.8cm] {$n-a+1-i$};
	\end{tikzpicture}
	\end{center}
	\caption{Proof to Proposition \ref{thm:path2}: Placing a Piece of Weight $a$ on a Path}
	\label{fig:pathbreakup}
\end{figure}
The strip to the right then has length $n-(i+a-1)=n-a+1-i$ (see Figure \ref{fig:pathbreakup}). Thus we have $n-a+1-i-(b-1)=n-a-b+2-i$ choices to place the Right piece (see Proposition \ref{thm:path1}). Thus the number of position with Left on the left and Right on the right is
\begin{align*}
	&\sum_{i=1}^{n-a-b+1} (n-a-b+2-i)\\
	=&(n-a-b+1)(n-a-b+2)-\sum_{i=1}^{n-a-b+1} i\\
	=&(n-a-b+1)(n-a-b+2)-\frac{(n-a-b+1)(n-a-b+2)}{2}\\
	=&\frac{(n-a-b+1)(n-a-b+2)}{2}\\
	=&\binom{n-a-b+2}{2}.
\end{align*}
Then $N_{LR}=2\binom{n-a-b+2}{2}$.

Similarly, the number of positions with Left on the left and right for $n\ge 2a$ and Right on the left and right for $n\ge 2b$ respectively, then are
\[ N_{LL}=\binom{n-2a+2}{2}\qquad N_{RR}=\binom{n-2b+2}{2}.\]

Since these are the only three possibilities for pairs of pieces, Equation \ref{eq:path2final} follows immediately.
\end{proof}

It is easy to see that if $a=b=1$, then the previous two bounds are
\begin{align*}
	f_1&=2n;\\
	f_2&=4\binom{n}{2}.
\end{align*}
These are the bounds given in Proposition \ref{thm:weight1bound}.
      	
\begin{example}
Consider $\weight{2}{3}$ on the path $P_5$. Let $x_i$ represent a Left piece occupying the spaces $i$ and $i+1$, and similarly for $y_i$. For example, the position in Figure \ref{fig:pathpos} is represented by $x_1y_3$.

\begin{figure}[!ht]
	\begin{center}
	\begin{tikzpicture}[scale=1]
		\draw[line width=1pt] (0,0) -- (5,0) -- (5, -1) -- (0, -1) -- cycle;
		\draw[line width=1pt] (1,0) -- (1, -1);
		\draw[line width=1pt] (2,0) -- (2, -1);
		\draw[line width=1pt] (3,0) -- (3, -1);
		\draw[line width=1pt] (4,0) -- (4, -1);
		\filldraw[fill=gray!30, line width=1pt] (0.15,-0.2)--(1.85,-0.2)--(1.85,-0.8)--(0.15,-0.8)-- cycle;
		\filldraw[fill=gray!30, line width=1pt] (2.15,-0.2)--(4.85,-0.2)--(4.85,-0.8)--(2.15,-0.8)-- cycle;
		\draw (0.5,-0.5) node {$L$};
		\draw (1.5,-0.5) node {$L$};
		\draw (2.5,-0.5) node {$R$};
		\draw (3.5,-0.5) node {$R$};
		\draw (4.5,-0.5) node {$R$};
		\draw[draw=gray, line width=1pt] (1,-0.23)--(1,-0.77);
		\draw[draw=gray, line width=1pt] (3,-0.23)--(3,-0.77);
		\draw[draw=gray, line width=1pt] (4,-0.23)--(4,-0.77);
	\end{tikzpicture}
	\end{center}
	\caption{An Example Position for $\weight{2}{3}$ on $P_5$}
	\label{fig:pathpos}
\end{figure}
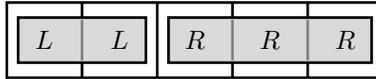      		

The corresponding simplicial complex is given in Figure \ref{fig:pathsc}.

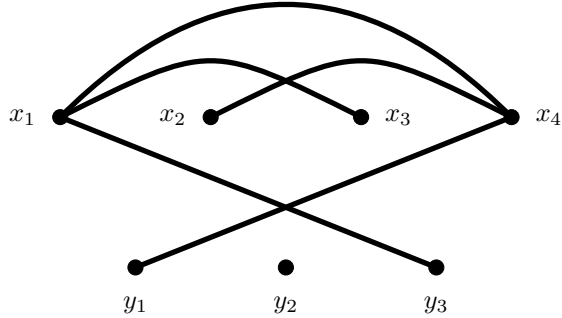
\begin{figure}[!ht]
	\begin{center}
	\begin{tikzpicture}[scale=1]
		\draw[line width=2pt] (0,0) .. controls (2,2) and (4,2) .. (6,0);
		\draw[line width=2pt] (0,0) .. controls (2,1) .. (4,0);
		\draw[line width=2pt] (2,0) .. controls (4,1) .. (6,0);
		\draw[line width=2pt] (0,0)--(5,-2);
		\draw[line width=2pt] (6,0)--(1,-2);
		\filldraw (0,0) circle (0.1cm);
		\filldraw (2,0) circle (0.1cm);
		\filldraw (4,0) circle (0.1cm);
		\filldraw (6,0) circle (0.1cm);
		\filldraw (1,-2) circle (0.1cm);
		\filldraw (3,-2) circle (0.1cm);
		\filldraw (5,-2) circle (0.1cm);
		\draw (-0.5,0) node {$x_1$};
		\draw (1.5,0) node {$x_2$};
		\draw (4.5,0) node {$x_3$};
		\draw (6.5,0) node {$x_4$};
		\draw (1,-2.5) node {$y_1$};
		\draw (3,-2.5) node {$y_2$};
		\draw (5,-2.5) node {$y_3$};
	\end{tikzpicture}
	\end{center}
	\caption{The Legal Complex $\Delta_{\weight{2}{3},P_5}$}
	\label{fig:pathsc}
\end{figure}

By Propositions \ref{thm:path1} and \ref{thm:path2} we have
\begin{align*}
	f_0&=1\\
	f_1&=2n-a-b+2=7,\\
	f_2&=\binom{n-2a+2}{2}+2\binom{n-a-b+2}{2}=5,
\end{align*}
and since $\max\{\left\lfloor n/a\right\rfloor, \left\lfloor n/b\right\rfloor\}=2$, we get the $f$-vector $(1, 7, 5)$, which can be verified from the simplicial complex.

To compare this with the Kruskal-Katona bound, we first need to find the $i$-canonical representations and calculate the $j$th pseudopowers.
\begin{center}
\begin{tabular}{ll}
	$f_1=\binom{7}{1}$ & $f_1^{(2)}=\binom{7}{2}=21$\\
	$f_2=\binom{3}{2}+\binom{2}{1}$ & $f_2^{(3)}=\binom{3}{3}+\binom{2}{2}=2$\\
	 & $f_2^{(1)}=\binom{3}{1}+\binom{2}{0}=4$
\end{tabular}
\end{center}
Then $f_2=5<f_1^{(2)}=21$, $f_3=0<f_2^{(3)}=2$, and $f_1=7>f_2^{(1)}=4$, showing that the formulae in Propositions \ref{thm:path1} and \ref{thm:path2} give, at least for this example, improved necessary conditions for a vector to be the $f$-vector of a legal complex of a placement game played on a path over the ones given in the Kruskal-Katona theorem.
\end{example}

We will now show that for fixed $a$ and $b$ and sufficiently large $n$, then the bound in Proposition \ref{thm:path2} on $f_2$ is better than the Kruskal-Katona bound. By the Kruskal-Katona theorem we have
\begin{align*}
	f_2\le f_1^{(2)}&=\binom{2n-a-b+2}{2}\\
		&=\frac{1}{2}\left[4n^2+n(6-4a-4b)+g(a,b)\right],
\end{align*}
where $g(a,b)$ is a function in $a$ and $b$, whereas Proposition \ref{thm:path2} gives
\begin{align*}
	f_2=&\binom{n-2a+1}{2}+\binom{n-2b+1}{2}+2\binom{n-a-b+1}{2}\\
	=&\frac{1}{2}\left[4n^2+2n(6-4a-4b)+h(a,b)\right],
\end{align*}
where $h(a,b)$ is a function in $a$ and $b$. Since $a,b\ge 1$, and thus $6-4a-4b<0$, we have $\frac{1}{2}\left[4n^2+2n(6-4a-4b)+g(a,b)\right]< \frac{1}{2}\left[4n^2+n(6-4a-4b)+h(a,b)\right]$ for sufficiently large $n$, showing that as $n$ grows larger our bound becomes increasingly better than the Kruskal-Katona bound.

\begin{remark}\label{remark:octal}
The game \textsc{O12} is the weight game $\weight{1}{2}$. It is mentioned by Brown \etal in \cite{GamePol} that this game played on a path is equivalent to the partizan Octal game where Left removes one piece and Right two, and both have the possibility to split the heap. It is easy to see that weight games played on a path are all equivalent to a specific partizan Octal game.
\end{remark}

\section{Playing on the Cycle $C_n$}
Consider Left playing pieces of weight $a$ and Right pieces of weight $b$ on a cycle of length $n\ge 3$. For this board, the `left' end of a piece is the end in counter-clockwise direction.
\begin{proposition}\label{thm:cycle1}
If a simplicial complex is the legal complex of $\weight{a}{b}$ played on $C_n$ then
\begin{equation}
	f_1= 
    \begin{cases}
    	0 & \text{if } a,b>n,\\
    	n & \text{if either } a\le n \text{ or } b\le n \text{ but not both},\\
    	2n & \text{if } a,b\le n.
    \end{cases}
\end{equation}
\end{proposition}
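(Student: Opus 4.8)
The plan is to count legal basic positions directly, mirroring the argument for the path in Proposition \ref{thm:path1}. A basic position is a single piece occupying a connected set of vertices, so the Left contribution (one piece of weight $a$) and the Right contribution (one piece of weight $b$) are disjoint and can be counted independently. Hence I would write $f_1 = L(a) + L(b)$, where $L(w)$ denotes the number of ways to place a single piece of weight $w$ on $C_n$; a piece of weight $w$ cannot be placed at all when $w>n$, so $L(w)=0$ in that range. The three cases of the statement then reduce to a single claim: for $1\le w\le n$ we have $L(w)=n$. Granting this, $a,b>n$ gives $0+0=0$; exactly one of $a\le n$, $b\le n$ gives $n+0=n$; and $a,b\le n$ gives $n+n=2n$, exactly as required.

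The heart of the argument is therefore the claim $L(w)=n$, and here the cycle behaves differently from the path. On $P_n$ a weight-$w$ piece has $n-w+1$ placements because the two endpoints obstruct sliding the piece off the board; on $C_n$ there are no endpoints, so the piece may sit over any arc of length $w$. To make this precise I would set up a bijection using the designated left (counter-clockwise) end introduced just before the statement: label the vertices $0,1,\ldots,n-1$ cyclically and send a placement to its left end. Conversely, each vertex $v$ is the left end of exactly one placement, the $w$ consecutive vertices $v,v+1,\ldots,v+w-1$ read modulo $n$. For $1\le w\le n-1$ this map is invertible, since the left end of a set of $w$ consecutive vertices is recovered as the unique vertex of the set whose counter-clockwise neighbor lies outside it; thus distinct left ends give distinct placements and $L(w)=n$.

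The step I expect to be the main obstacle is the boundary case $w=n$, where a single piece covers the entire cycle. Here the left-end recovery above breaks down, because no vertex has its counter-clockwise neighbor outside the full set, and a literal reading of ``choose the left end'' would instead identify all $n$ rotations with the one full-board placement. I would treat this case separately and reconcile it with the paper's convention for a piece that occupies the whole board, since this is the only point at which the clean bijection needs extra justification; the generic range $1\le w\le n-1$ is entirely routine by the counting above. Once the $w=n$ boundary is settled, assembling $f_1=L(a)+L(b)$ across the three ranges of $(a,b)$ finishes the proof.
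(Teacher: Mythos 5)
Your argument is essentially the paper's own: the paper's entire proof is the one-sentence observation that the left (counter-clockwise) end of a piece can be placed on any of the $n$ spaces, whether it is a Left or a Right piece --- i.e.\ exactly your left-end bijection, with the decomposition $f_1=L(a)+L(b)$ left implicit. Your write-up is more careful, and the care pays off at precisely the point you flag. The paper's proof asserts this ``if its weight is \emph{less than} $n$'' --- a strict inequality --- while the proposition is stated for $a\le n$, so the boundary case $w=n$ that you isolate is not addressed by the paper at all. Moreover it is a genuine problem rather than a missing detail: a piece of weight $n$ on $C_n$ covers every vertex, so all $n$ choices of left end yield the same occupied set. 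If basic positions are identified by which vertices each player occupies (the convention under which the paper's labels $x_i$, $y_i$ name distinct vertices of the legal complex when $w<n$), then for $a=n$ and $b>n$ there is exactly one legal basic position, giving $f_1=1\ne n$; similarly $a=b=n$ gives $f_1=2\ne 2n$. So you should not expect to ``reconcile'' $w=n$ with the stated formula: either the proposition must be read with strict inequalities (matching the paper's proof), or one must adopt the artificial convention that the $n$ rotations of a full-cycle piece are distinct positions. Your treatment of the generic range $1\le w\le n-1$, including recovering the left end as the unique covered vertex whose counter-clockwise neighbour is uncovered, is correct and complete; the gap you identified lies in the paper, not in your reasoning.
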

\begin{proof}
	The left end of a piece can be placed on any of the $n$ spaces if its weight is less than $n$, no matter if it is a Right or Left piece.
\end{proof}

\begin{proposition}\label{thm:cycle2}
If a simplicial complex is the legal complex of $\weight{a}{b}$ played on $C_n$ then
\begin{equation}
	f_2= N_{LL}+N_{LR}+N_{RR}
\end{equation}
where
\begin{equation*}
    N_{LL}=
    \begin{cases}
    	0 & \text{if } 2a>n,\\
    	\frac{n(n-2a+1)}{2} & \text{if } 2a\le n,
    \end{cases}\qquad
      N_{RR}=
    \begin{cases}
    	0 & \text{if } 2b>n,\\
    	\frac{n(n-2b+1)}{2} & \text{if } 2b\le n,
    \end{cases}
\end{equation*}
are the number of positions with two Left pieces, respectively two Right pieces, and
\begin{equation*}
    N_{LR}=
    \begin{cases}
    	0 & \text{if } a+b>n,\\
    	n(n-a-b+1) & \text{if } a+b\le n,
    \end{cases}
\end{equation*}
is the number of positions with one Left and one Right piece.
\end{proposition}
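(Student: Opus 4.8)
The plan is to mirror the path argument of Proposition \ref{thm:path2}, exploiting the rotational symmetry of $C_n$ to place the first piece and then reducing the placement of the second piece to the single-piece path count of Proposition \ref{thm:path1}. The key structural observation is that deleting a contiguous block of $a$ vertices (one placed piece) from $C_n$ leaves a path $P_{n-a}$ on the remaining empty vertices, so the second piece must sit entirely inside this arc and cannot wrap around the cycle.

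First I would compute $N_{LR}$. Since the two pieces carry different colours they are distinguishable, so I can count placements directly without correcting for symmetry. The left end of the Left piece can be placed on any of the $n$ vertices (as in Proposition \ref{thm:cycle1}); for each such choice the $a$ occupied vertices are deleted, leaving an arc $P_{n-a}$, on which by Proposition \ref{thm:path1} a single Right piece of weight $b$ can be placed in $(n-a)-b+1 = n-a-b+1$ ways whenever $n-a\ge b$. Multiplying gives $N_{LR}=n(n-a-b+1)$ when $a+b\le n$, and $N_{LR}=0$ otherwise since then no arc is long enough to hold the Right piece.

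Next I would treat $N_{LL}$, with $N_{RR}$ handled identically after replacing $a$ by $b$. The same count applies at first — fix the left end of one Left piece in $n$ ways, delete its $a$ vertices, and place the second Left piece of weight $a$ on the resulting $P_{n-a}$ in $n-2a+1$ ways by Proposition \ref{thm:path1} — but now both pieces are Left pieces and hence indistinguishable, so each two-piece position is counted exactly twice (once with each of its pieces taken as the fixed one). Dividing by $2$ yields $N_{LL}=\tfrac{n(n-2a+1)}{2}$ when $2a\le n$ and $0$ otherwise. Finally, every legal position with exactly two pieces is of precisely one of the mutually exclusive types $LL$, $LR$, $RR$, so $f_2=N_{LL}+N_{LR}+N_{RR}$.

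The main point requiring care is exactly this double-counting bookkeeping: the factor $\tfrac{1}{2}$ must be applied to the monochromatic counts $N_{LL}$ and $N_{RR}$ but \emph{not} to $N_{LR}$, where the two colours make the pieces distinguishable. I would also verify the boundary cases $a+b=n$ and $2a=n$ as sanity checks: in the former a Left and a Right piece tile the whole cycle in $n$ distinct ways, and in the latter two Left pieces tile $C_{2a}$ in $n/2=a$ ways, both consistent with the stated formulas.
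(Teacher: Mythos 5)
Your proof is correct and follows essentially the same approach as the paper: fix the first piece in one of $n$ rotational positions, observe that the remaining empty vertices form a path $P_{n-a}$, count placements of the second piece via Proposition \ref{thm:path1}, and divide by $2$ only in the monochromatic cases. Your explicit justification of the factor $\tfrac{1}{2}$ (double-counting of indistinguishable ordered placements) and the boundary-case sanity checks simply make precise what the paper compresses into ``due to symmetry.''
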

\begin{proof}
	We will first look at the number of positions with two Left pieces if $n\ge 2a$. There are $n$ choices for placing the first piece. Placing the second piece is equivalent to placing one piece on the path $P_{n-a}$, i.e. there are $(n-a)-(a-1)$ choices for placing the second piece. Due to symmetry, there are then $n(n-2a+1)/2$ positions of this form. Similarly, the number of positions with two Right pieces is $n(n-2b+1)/2$ if $n\ge 2b$.
		
	To count the number of positions with one Left and one Right piece when $n\ge a+b$, we first place the Left, then the Right piece. There are $n$ choices for placing the Left piece. Placing the Right piece is then equivalent to placing a piece of weight $b$ on the path $P_{n-a}$, i.e. there are $(n-a)-(b-1)$ choices for placing the second pieces. Thus, there are $n(n-a-b+1)$ positions of this form.
\end{proof}

If $a=b=1$, then the previous two bounds are
\begin{align*}
	f_1&=2n;\\
	f_2&=4\binom{n}{2},
\end{align*}
which are the bounds given in Proposition \ref{thm:weight1bound}.

\begin{example}
Consider $\weight{2}{3}$ on the cycle $C_5$. Let $x_i$ represent a Left piece whose left end is on space $i$, and similarly for $y_i$. \Eg the position in Figure \ref{fig:C5ex} is represented by $x_1y_3$.
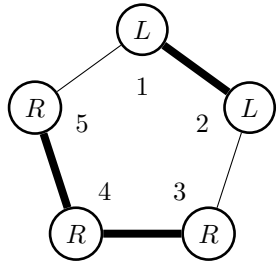
\begin{figure}[!ht]
	\begin{center}
	\begin{tikzpicture}[scale=0.5]
		\node[shape=circle, draw, line width=1pt] (1) at (0,3) {$L$};
		\node[shape=circle, draw, line width=1pt] (3) at (1.763,-2.427) {$R$};
		\node[shape=circle, draw, line width=1pt] (4) at (-1.763,-2.427) {$R$};
		\node[shape=circle, draw, line width=1pt] (2) at (2.853,0.927) {$L$};
		\node[shape=circle, draw, line width=1pt] (5) at (-2.853,0.927) {$R$};
		\draw[line width=3pt] (1)--(2);
		\draw (2)--(3);
		\draw[line width=3pt] (3)--(4);
		\draw[line width=3pt] (4)--(5);
		\draw (5)--(1);
		\node at (0, 1.5) {$1$};
		\node at (1.6,0.5) {$2$};
		\node at (1,-1.3) {$3$};
		\node at (-1, -1.3) {$4$};
		\node at (-1.6, 0.5) {$5$};
	\end{tikzpicture}
	\end{center}
	\caption{An Example Position for $\weight{2}{3}$ on $C_5$}
	\label{fig:C5ex}
\end{figure}

\noindent The corresponding legal complex is given in Figure \ref{fig:cyclesc}.

\begin{figure}[!h]
	\begin{center}
	\begin{tikzpicture}[scale=1]
		\draw[line width=2pt] (0,3) -- (0,2);
		\draw[line width=2pt] (2.853,0.927) -- (1.902,0.618);
		\draw[line width=2pt] (1.763,-2.427) -- (1.176,-1.618);
		\draw[line width=2pt] (-1.763,-2.427) -- (-1.176,-1.618);
		\draw[line width=2pt] (-2.853,0.927) -- (-1.902,0.618);
		\draw[line width=2pt] (0,2)--(1.176,-1.618);
		\draw[line width=2pt] (0,2)--(-1.176,-1.618);
		\draw[line width=2pt] (1.176,-1.618)--(-1.902,0.618);
		\draw[line width=2pt] (1.902,0.618)--(-1.902,0.618);
		\draw[line width=2pt] (-1.176,-1.618)--(1.902,0.618);
		\filldraw (0,2) circle (0.1cm);
		\filldraw (1.902,0.618) circle (0.1cm);
		\filldraw (-1.902,0.618) circle (0.1cm);
		\filldraw (1.176,-1.618) circle (0.1cm);
		\filldraw (-1.176,-1.618) circle (0.1cm);
		\filldraw (0,3) circle (0.1cm);
		\filldraw (2.853,0.927) circle (0.1cm);
		\filldraw (1.763,-2.427) circle (0.1cm);
		\filldraw (-2.853,0.927) circle (0.1cm);
		\filldraw (-1.763,-2.427) circle (0.1cm);
		\draw (0.75,3) node {$y_3$};
		\draw (0.5,2) node {$x_1$};
		\draw (1.676,-1.618) node {$x_3$};
		\draw (-1.676,-1.618) node {$x_4$};
		\draw (2.302,0.218) node {$x_2$};
		\draw (-2.302,0.218) node {$x_5$};
		\draw (3.253,0.527) node {$y_4$};
		\draw (-3.253,0.527) node {$y_2$};
		\draw (2.5,-2.427) node {$y_5$};
		\draw (-2.5,-2.427) node {$y_1$};
	\end{tikzpicture}
	\end{center}
	\caption{The Legal Complex $\Delta_{\weight{2}{3},C_5}$}
	\label{fig:cyclesc}
\end{figure}
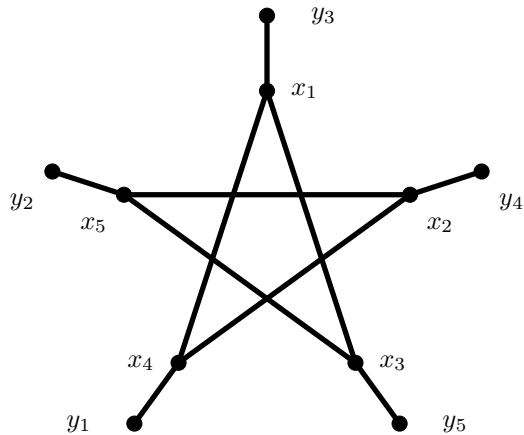

By Propositions \ref{thm:cycle1} and \ref{thm:cycle2} we have
\begin{align*}
	f_0&=1\\
	f_1&=2n=10,\\
	f_2&=\frac{n(n-2a+1)}{2}+n(n-a-b+1)=10,
\end{align*}
and since $\max\{\left\lfloor n/a\right\rfloor, \left\lfloor n/b\right\rfloor\}=2$, we get the $f$-vector $(1, 10, 10)$, which can be verified from the simplicial complex.

We will compare these with the Kruskal-Katona bound. The $i$-canonical representations and $j$th pseudopowers are:
\begin{center}
\begin{tabular}{ll}
	$f_1=\binom{10}{1}$ & $f_1^{(2)}=\binom{10}{2}=45$\\
	$f_2=\binom{5}{2}$ & $f_2^{(3)}=\binom{5}{3}=10$\\
	 & $f_2^{(1)}=\binom{5}{1}=5$
\end{tabular}
\end{center}
Then $f_2=10<f_1^{(2)}=45$, $f_3=0<f_2^{(3)}=10$, and $f_1=10>f_2^{(1)}=5$, showing that for this example Propositions \ref{thm:cycle1} and \ref{thm:cycle2} give improved necessary conditions for a vector to be the $f$-vector of a legal complex of a placement game played on a cycle over the ones given in the Kruskal-Katona theorem.
\end{example}

Similar to placement games on a path, we have that for fixed $a$ and $b$ and sufficiently large $n$ the bound in Proposition \ref{thm:cycle2} on $f_2$ is better than the Kruskal-Katona bound. By the Kruskal-Katona theorem we have
\begin{align*}
	f_2\le f_1^{(2)}&=\binom{2n}{2}\\
		&=\frac{1}{2}\left[4n^2+n(-2)\right],
\end{align*}
whereas Proposition \ref{thm:cycle2} gives
\begin{align*}
	f_2= &\frac{n(n-2a+1)}{2}+\frac{n(n-2b+1)}{2}+n(n-a-b+1)\\
	=&\frac{1}{2}\left[4n^2+n(4-4a-4b)\right]\\
	<&\frac{1}{2}\left[4n^2+n(-2)\right],
\end{align*}
since $a,b\ge 1$ implies $4-4a-4b\le -4$, showing that as $n$ grows larger our bound becomes increasingly better than the Kruskal-Katona bound.

\section{Playing on the Complete Graph $K_n$}
Finally, we will consider placement games played on a complete graph of $n$ vertices in which Left places pieces of weight $a$ and Right pieces of weight $b$.
\begin{proposition}\label{thm:complete}
If a simplicial complex is the legal complex of $\weight{a}{b}$ played on $K_n$ then
\begin{equation}\label{eq:completeab}
	f_k=\sum_{l=0}^k\left(\frac{\displaystyle\prod_{i=0}^{k-l-1}\binom{n-ia}{a}}{(k-l)!}\right)\left(\frac{\displaystyle\prod_{j=0}^{l-1}\binom{n-(k-l)a-jb}{b}}{l!}\right)
\end{equation}
for $k\ge 0$.
\end{proposition}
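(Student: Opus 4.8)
The plan is to count positions directly, exploiting the fact that $K_n$ is complete so that \emph{every} set of $a$ (or $b$) vertices is connected. Consequently, in the weight game $\weight{a}{b}$ on $K_n$ there are no constraints beyond disjointness: a placement is legal precisely when its pieces occupy pairwise disjoint vertex sets of the correct sizes. Since the weight game has no additional rules, condition (iv) of Definition \ref{def:placement} holds automatically, so the positions with $k$ pieces are exactly the disjoint placements of $k$ pieces, and $f_k$ counts these.

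First I would stratify the positions with $k$ pieces according to the number $l$ of Right pieces (each of weight $b$), so that the remaining $k-l$ pieces are Left pieces (each of weight $a$); these strata are disjoint and together exhaust the positions with $k$ pieces, so it suffices to count each stratum and sum over $0\le l\le k$. For fixed $l$, I would count the Left pieces first and the Right pieces second. Placing the Left pieces as an ordered sequence of $k-l$ disjoint $a$-subsets gives $\prod_{i=0}^{k-l-1}\binom{n-ia}{a}$ choices, the $i$th factor reflecting that $ia$ vertices are already occupied; since Left's pieces are indistinguishable, each final Left configuration arises from exactly $(k-l)!$ such orderings, so the number of Left configurations is this product divided by $(k-l)!$.

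Next, given any Left configuration occupying $(k-l)a$ vertices, the remaining $n-(k-l)a$ vertices again induce a complete graph, so placing the Right pieces is the identical counting problem with $b$ in place of $a$: an ordered sequence of $l$ disjoint $b$-subsets yields $\prod_{j=0}^{l-1}\binom{n-(k-l)a-jb}{b}$ choices, and dividing by $l!$ accounts for indistinguishability. The crucial point---and the only place where completeness of the board is really used---is that this Right count depends on the Left configuration only through the \emph{number} $(k-l)a$ of occupied vertices, not through which vertices they are; this uniformity is exactly what lets the two factors multiply. Combining, the stratum with $l$ Right pieces contributes precisely the summand in \eqref{eq:completeab}, and summing over $l$ gives the claim.

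The main thing to get right is the bookkeeping of indistinguishability together with this symmetry argument: I must verify that each position is counted once and only once. Distinct values of $l$ give genuinely distinct positions, so there is no cross-stratum overcounting; within a stratum, the two factorial divisions remove exactly the orderings of like-colored pieces and nothing else, since Left and Right pieces are distinguished by color and never share a vertex. I expect no analytic difficulty---no inequalities or asymptotics are involved---so the proof is a clean enumeration once the completeness-driven independence of the two factors is made explicit.
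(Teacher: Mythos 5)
Your proof is correct and follows essentially the same route as the paper: stratify by the number $l$ of Right pieces, count ordered placements (using completeness so each piece placement is just a choice of an $a$- or $b$-subset of the remaining vertices), divide by $(k-l)!$ and $l!$ for indistinguishability, and sum over $l$. Your writeup is in fact somewhat more explicit than the paper's about why the two factors multiply (the Right count depending only on \emph{how many} vertices Left occupies) and about the absence of overcounting across strata, but these are refinements of the same argument, not a different one.
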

\begin{proof}
Playing a piece of weight $a$ on the complete graph with $n$ vertices is equivalent to deleting $a$ vertices from the graph. Thus placing a second piece on the graph is equivalent to placing a piece on the complete graph on $n-a$ vertices.

Also, since every pair of vertices is connected, playing a piece of weight $a$ is equivalent to playing $a$ pieces of weight $1$, thus there are $\binom{n}{a}$ choices for placing the piece.

Thus playing $s$ pieces of weight $a$ we have
\[\frac{\prod_{i=0}^{s-1}\binom{n-ia}{a}}{s!}\]
choices. Then playing $k-l$ pieces of weight $a$ and $l$ pieces of weight $b$ (assuming without loss of generality we place the pieces of weight $a$ first) we have
\[\frac{\prod_{i=0}^{k-l-1}\binom{n-ia}{a}}{(k-l)!}\;\frac{\prod_{j=0}^{l-1}\binom{n-(k-l)a-jb}{b}}{l!}\]
different positions.

To get the total number of positions with $k$ pieces played, we let $l$ range from $0$ to $k$ and add the terms, giving Equation \ref{eq:completeab}.
\end{proof}

If $a=b$, then the previous bound becomes
\begin{align*}
	f_k&=\sum_{l=0}^k\frac{n(n-1)\cdots(n-(k-l)a+1)(n-(k-l)a)\cdots(n-ka+1)}{(k-l)!l!(a!)^k}\\
		&=\frac{n!}{(n-ka)!(a!)^k}\sum_{l=0}^k\frac{1}{k!}\binom{k}{l}\\
		&=\frac{n!}{(n-ka)!k!(a!)^k}\sum_{l=0}^k\binom{k}{l}\\
		&=\frac{n!}{(n-ka)!k!(a!)^k}2^k.
\end{align*}

If $a=b=1$, then this becomes
\begin{align*}
	f_k&=\frac{n!}{(n-k)!k!}2^k\\
		&=\binom{n}{k}2^k
\end{align*}
which is the bound given in Proposition \ref{thm:weight1bound}.

If we assume without loss of generality that $a\le b$, then we have
\begin{align*}
	f_k&=\sum_{l=0}^k\frac{n(n-1)\cdots(n-(k-l)a-lb+1)}{(k-l)!l!(a!)^{k-l}(b!)^l}\\
		&=\frac{n!}{k!}\sum_{l=0}^k\frac{\binom{k}{l}}{(a!)^{k-l}(b!)^l(n-(k-l)a-lb)!}\\
		&\le\frac{n!}{k!}\sum_{l=0}^k\frac{\binom{k}{l}}{(a!)^k(n-kb)!}\\
		&=\frac{n!}{(n-kb)!k!(a!)^k}2^k.
\end{align*}

We can similarly find a lower bound. Thus
\[\frac{n!}{(n-ka)!k!(b!)^k}2^k\le f_k\le \frac{n!}{(n-kb)!k!(a!)^k}2^k.\]
For fixed $a, b,$ and $k$, we then have
\[
	n(n-1)\cdots(n-ka+1)\frac{2^k}{k!(b!)^k}\le f_k\le n(n-1)\cdots(n-kb+1)\frac{2^k}{k!(a!)^k},
\]
and since \[n(n-1)\cdots(n-ka+1)\ge (n-ka+1)^{ka} \text{ and } n(n-1)\cdots(n-kb+1)\le n^{kb},\] this implies
\[
	C'(n-ka+1)^{ka}=C'n^{ka}+O(n^{ka-1})\le f_k\le Cn^{kb},
\]
where $C$ and $C'$ are constants depending on $a$ and $k$, respectively $b$ and $k$.

Also note that $\weight{a}{b}$ played on the complete graph $K_n$ is the least restrictive game on the most connected board. Thus the formula in Proposition \ref{thm:complete} gives upper bounds for any placement game with weights on any board.

\begin{example}\label{ex:complete}
Consider $\weight{2}{2}$ and let the board be the complete graph $K_4$. Let $x_{i,j}$ represent a Left piece occupying the vertices $i$ and $j$, and similarly for $y_{i,j}$. For example the position in Figure \ref{fig:completeex} is represented by $x_{1,4}y_{2,3}$.

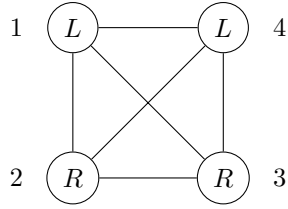
\begin{figure}[!ht]
	\begin{center}
	\begin{tikzpicture}
		\node[shape=circle, draw] (1) at (0,0) {$L$};
		\node[shape=circle, draw] (2) at (2,0) {$L$};
		\node[shape=circle, draw] (3) at (0,-2) {$R$};
		\node[shape=circle, draw] (4) at (2,-2) {$R$};
		\draw (1) to (2) to (3) to (4) to (1) to (3);
		\draw (2) to (4);
		\node at (-0.75,0) {$1$};
		\node at (-0.75,-2) {$2$};
		\node at (2.75,0) {$4$};
		\node at (2.75,-2) {$3$};
	\end{tikzpicture}
	\end{center}
	\caption{An Example Position for $\weight{2}{2}$ on $K_4$}
	\label{fig:completeex}
\end{figure}

The corresponding simplicial complex is given in Figure \ref{fig:completesc}.
\begin{figure}[!ht]
	\begin{center}
	\begin{tikzpicture}[scale=1.5]
		\draw[line width=1.3] (0,0) -- (0,1) -- (1,1) -- (1,0)--cycle;
		\draw[line width=1.3] (2,0) -- (3,0) -- (3,1) -- (2,1) -- cycle;
		\draw[line width=1.3] (4,0) -- (5,0) -- (5,1) -- (4,1) -- cycle;
		\filldraw (0,0) circle (0.066cm);
		\filldraw (0,1) circle (0.066cm);
		\filldraw (1,0) circle (0.066cm);
		\filldraw (1,1) circle (0.066cm);
		\filldraw (2,0) circle (0.066cm);
		\filldraw (2,1) circle (0.066cm);
		\filldraw (3,0) circle (0.066cm);
		\filldraw (3,1) circle (0.066cm);
		\filldraw (4,0) circle (0.066cm);
		\filldraw (4,1) circle (0.066cm);
		\filldraw (5,0) circle (0.066cm);
		\filldraw (5,1) circle (0.066cm);
		\draw (0,1.3) node {$x_{1,2}$};
		\draw (1,1.3) node {$x_{3,4}$};
		\draw (2,1.3) node {$x_{1,3}$};
		\draw (3,1.3) node {$x_{2,4}$};
		\draw (4,1.3) node {$x_{1,4}$};
		\draw (5,1.3) node {$x_{2,3}$};
		\draw (1,-0.4) node {$y_{1,2}$};
		\draw (0,-0.4) node {$y_{3,4}$};
		\draw (3,-0.4) node {$y_{1,3}$};
		\draw (2,-0.4) node {$y_{2,4}$};
		\draw (5,-0.4) node {$y_{1,4}$};
		\draw (4,-0.4) node {$y_{2,3}$};
	\end{tikzpicture}
	\end{center}
	\caption{The Legal Complex $\Delta_{\weight{2}{2},K_4}$}
	\label{fig:completesc}
\end{figure}
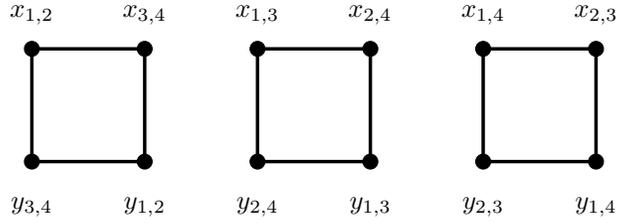

By Proposition \ref{thm:complete} we have
\begin{align*}
	f_0&=1\\
	f_1&=\binom{n}{a}+\binom{n}{b}=12,\\
	f_2&=\frac{\binom{n}{a}\binom{n-a}{a}}{2}+\binom{n}{a}\binom{n-a}{b}+\frac{\binom{n}{b}\binom{n-b}{b}}{2}=12,
\end{align*}
and since $\max\{\left\lfloor n/a\right\rfloor, \left\lfloor n/b\right\rfloor\}=2$, we get the $f$-vector $(1, 12, 12)$, which can be verified from the simplicial complex.

The $i$-canonical representations and the $j$th pseudopowers are:
\begin{center}
\begin{tabular}{ll}
	$f_1=\binom{12}{1}$ & $f_1^{(2)}=\binom{12}{2}=66$\\
	$f_2=\binom{5}{2}+\binom{2}{1}$ & $f_2^{(3)}=\binom{5}{3}+\binom{2}{2}=11$\\
	 & $f_2^{(1)}=\binom{5}{1}+\binom{2}{0}=6$
\end{tabular}
\end{center}
Then $f_2=12<f_1^{(2)}=66$, $f_3=0<f_2^{(3)}=11$, and $f_1=12>f_2^{(1)}=6$, showing that for this example the formula in Proposition \ref{thm:complete} gives improved necessary conditions for a vector to be the $f$-vector of a legal complex.
\end{example}

We will now show that for fixed $a$ and $b$ and sufficiently large $n$, the bound in Proposition \ref{thm:complete} for $f_2$ is better than the Kruskal-Katona bound. By the Kruskal-Katona theorem we have
\begin{align*}
	f_2\le f_1^{(2)}&=\binom{\binom{n}{a}+\binom{n}{b}}{2}\\
		&=\frac{1}{2}\left[\binom{n}{a}\left(\binom{n}{a}+2\binom{n}{b}-1\right)+\binom{n}{b}\left(\binom{n}{b}-1\right)\right],
\end{align*}
whereas Proposition \ref{thm:complete} gives
\begin{align*}
	f_2= &\frac{1}{2}\binom{n}{a}\binom{n-a}{a}+\frac{1}{2}\binom{n}{b}\binom{n-b}{b}+\binom{n}{a}\binom{n-a}{b}\\
	=&\frac{1}{2}\left[\binom{n}{a}\left(\binom{n-a}{a}+2\binom{n-a}{b}\right)+\binom{n}{b}\binom{n-b}{b}\right].
\end{align*}

Recall that $f(n)= O(g(n))$ means that $f(n)\le Cg(n)$ for some positive constant $C$. Then $f(n)= O(n^k)$ means that $f(n)$ is bounded by a polynomial of degree at most $k$. Also recall that $f(n)=g(n)+O(n^k)$ means $f(n)-g(n)= O(n^k)$.

Since 
\begin{align*}
	\binom{n}{i}&=\frac{1}{i!}\left(n^i-n^{i-1}\frac{i(i-1)}{2}+O(n^{i-2})\right)\text{ for } i\ge 2\\
	\binom{n-i}{j}&=\frac{1}{j!}\left(n^j-n^{j-1}\frac{j(j+2i-1)}{2}+O(n^{j-2})\right)\text{ for } j\ge 2\\
\end{align*}
it easily follows that $\binom{n-a}{a}+2\binom{n-a}{b}\le\binom{n}{a}+2\binom{n}{b}-1$ and $\binom{n-b}{b}\le\binom{n}{b}-1$. Thus
\begin{align*}
&\frac{1}{2}\left[\binom{n}{a}\left(\binom{n-a}{a}+2\binom{n-a}{b}\right)+\binom{n}{b}\binom{n-b}{b}\right]\\
&< \frac{1}{2}\left[\binom{n}{a}\left(\binom{n}{a}+2\binom{n}{b}-1\right)+\binom{n}{b}\left(\binom{n}{b}-1\right)\right],
\end{align*}
showing that the new bound is better than the Kruskal-Katona bound as $n$ grows larger.

We have not compared the bounds for $f_k$ with $k>2$ since it is difficult to find the $i$-canonical representation of $f_{k-1}$ in this case.

\section{Discussion}

A general question is to find sufficient conditions for a simplicial complex to be a legal complex. Since it is already not easy to find necessary conditions for a vector to be the $f$-vector of a legal complex, this seems to be very hard and much further work is needed.


\begin{thebibliography}{99}
 

 
 \bibitem{BCG04} E.~R.~Berlekamp, J.~H.~Conway, \& R.~K.~Guy,
   \textit{Winning Ways for your Mathematical Plays,} 
   A K Peters, Ltd., 2001.
   
  \bibitem{GamePol} J. ~I.~ Brown, D.~Cox, A.~Hoefel, N.~McKay,
  R.~Milley, R.~J.~Nowakowski, A.~A.~Siegel, Polynomial 
   Profiles of Placement Games. In this volume.
   
\bibitem{GameCompI}
	S.~ Faridi, S.~ Huntemann, R.~J.~ Nowakowski.
	Games and Complexes I: Transformation via Ideals.
	In this volume.
 
 

 
\bibitem{Ga74}
M.~Gardner.
\newblock Mathematical Games: Cram, crosscram and quadraphage: new games having elusive winning strategies.
\newblock \emph{Scientific American}, 230(2):106--108, 1974.
 

 
 \bibitem{MSc}
 S.~ Huntemann. Simplicial Complexes of Placement Games. Master's Thesis, Dalhousie University, 2013.
  
\bibitem{Ka68}
G.~Katona.
\newblock A theorem of finite sets.
\newblock \emph{Theory of graphs (Proc. Colloq., Tihany, 1966)}, pages 187--207. Academic Press, New York, 1968.

\bibitem{Kr63}
J.B.~Kruskal.
\newblock The number of simplices in a complex.
\newblock In \emph{Mathematical optimization techniques}, pages 251--278. Univ. of California Press, Berkeley, Calif., 1963.


 
 \end{thebibliography}
\end{document}